\newcommand{\RR}{\mathbb{R}}
\newcommand{\st}{{\rm s.t.}}
\DeclarePairedDelimiter{\norm}{\lVert}{\rVert}
\DeclareMathOperator*{\conv}{Conv}
\spnewtheorem{assumption}{Assumptions}{\bf}{\it}
\spnewtheorem{propertyy}{Property}{\bf}{\it}
\DeclareMathOperator*{\Int}{Int}
\begin{document}

\title{Primal-dual subgradient method for constrained\\ convex optimization problems
}


\author{Michael R. Metel         \and
        Akiko Takeda 
}


\institute{Michael R. Metel \at
              RIKEN Center for Advanced Intelligence Project\\
              Tokyo, Japan \\
              \email{michaelros.metel@riken.jp}           
           \and
           Akiko Takeda \at
              Department of Creative Informatics\\
              Graduate School of Information Science and Technology\\
              The University of Tokyo\\
              Tokyo, Japan\\
              RIKEN Center for Advanced Intelligence Project\\
              Tokyo, Japan\\
              \email{takeda@mist.i.u-tokyo.ac.jp}
}

\date{Received: date / Accepted: date}

\maketitle

\begin{abstract}
This paper considers a general convex constrained problem setting where functions are not assumed 
to be differentiable nor Lipschitz continuous. Our motivation is in finding a simple first-order 
method for solving a wide range of convex optimization problems with minimal requirements. We study 
the method of weighted dual averages \cite{nesterov2009} in this setting and prove that it is an 
optimal method.
\keywords{convex optimization \and subgradient method \and non-smooth optimization \and iteration 
	complexity \and constrained optimization}
\end{abstract}
\vspace{-12mm}
\section*{Declarations}
\vspace{-2mm}
\noindent Funding:
The research of the first author is supported in part by JSPS KAKENHI Grants No.19H04069. 
The research 
of the second author is supported in part by JSPS KAKENHI Grants No. 17H01699 and 19H04069.\\

\vspace{-2mm}
\noindent{Conflicts of interest/competing interests, availability of data and material, and code 
availability:}
Not applicable.

\section{Introduction}
\label{int}
In this work we are interested in constrained minimization problems,  
\begin{alignat}{6}\label{eq:1}
&&\min\limits_{x\in\RR^d}\text{ }&f(x)\\
&&\st\text{ }&f_i(x)\leq 0 \quad i=1,...,n\nonumber\\
&&&h_i(x)= 0 \quad i=1,...,p,\nonumber
\end{alignat}
where $f(x)$ and all $f_i(x)$ are convex functions and all $h_i(x)$ are affine functions from $\RR^d$ 
to $\RR$. 
Our goal is to develop an algorithm with a proven convergence rate to the constrained minimum of 
$f(x)$ without 
any further assumptions besides standard regularity conditions. In 
particular, we do not assume that $f(x)$ or the functions $\{f_i(x)\}$ are 
differentiable nor Lipschitz continuous, and we do not assume a priori that algorithm iterates are 
constrained to any bounded set. All that is required is that an optimal primal and dual 
solution exist and that strong duality holds.

Convex functions which are neither differentiable nor Lipschitz continuous arise naturally in 
optimization problems, such as the maximum of a set of quadratic functions and the unconstrained 
soft-margin SVM formulation \cite[Ch. 15]{shalev2014}. Our 
work's main motivation though is in finding a general algorithm which can be applied to a wide 
range of applications without the need for detailed function properties or problem specific 
parameter tuning.

In terms of non-asymptotic convergence guarantees for constrained non-smooth convex 
optimization problems, there are deterministic algorithms, such as the subgradient method 
\cite[Theorem 3.2.3]{nesterovintro} and its extension using mirror descent \cite{beck2010}, as 
well as algorithms for stochastic optimization settings such as the cooperative stochastic 
approximation algorithm \cite{lan2016}, which can be seen as a stochastic extension of the 
subgradient method, and the primal-dual stochastic gradient method \cite{xu2020}, which is based 
on the analysis of the augmented Lagrangian. After $K$ iterations, all of 
the algorithms discussed have a proven rate of convergence of $O(\frac{1}{\sqrt{K}})$ towards an 
optimal solution, which is the best rate achievable using a first-order method, in the sense of 
matching the lower complexity 
bound for the unconstrained version of our problem setting \cite[Section 
3.2.1]{nesterovintro}. 
All of these algorithms' convergence results rely on some combination of a compact feasible region, 
bounds on the subgradients, or bounds on the constraint functions though.

If we consider the unconstrained 
problem with $n=p=0$, recent works include \cite{grimmer2019}, which proved that the 
convergence rate of the subgradient method holds under the more relaxed assumption compared to 
Lipschitz continuity, that 
$f(x)-f(x^*)\leq D(\norm{x-x^*}_2)$ holds where $x^*$ is an optimal solution and $D(\cdot)$ is a 
non-negative non-decreasing function. The convergence rate of $O(\frac{1}{\sqrt{K}})$ for the general 
unconstrained convex optimization problem was solved earlier though in \cite{nesterov2009}, with 
the method of weighted dual averages. The iterates of the algorithm can be shown to be bounded for 
a range of convex optimization problems, including unconstrained minimization without the 
assumption of a global Lipschitz parameter. The path taken in this paper is to apply the method of 
weighted dual averages, presented as Algorithm \ref{alg:sgd}, to the general convex constrained 
problem \eqref{eq:1} and establish the same rate of convergence as previous works under our more 
relaxed 
assumptions. 

\section{Preliminaries}

We define the Lagrangian function as 
\begin{alignat}{6}
L(x,\mu,\theta):=&f(x)+\sum_{i=1}^n\mu_if_i(x)+\sum_{i=1}^p\theta_ih_i(x),\nonumber
\end{alignat}
and the dual problem as
\begin{alignat}{6}
&&\max\limits_{\substack{\mu\geq \RR^n_+\\\theta\in\RR^p}}\min\limits_{x\in\RR^d}\text{ 
}&L(x,\mu,\theta).\nonumber
\end{alignat}
The following assumptions are sufficient for \eqref{eq:1} to be a convex optimization problem with an 
optimal primal and dual solution with strong duality. 
\begin{assumption}
	\label{assump}
	\text{ }
	\begin{enumerate}
		\item $f(x)$ and $f_i(x)$ for $i=1,...,n$ are real-valued convex functions, and $h_i(x)$ 
		for $i=1,...,p$ 
		are affine functions, all over $\RR^d$.
		\item Slater's condition holds: there exists an $\hat{x}\in \RR^d$ such that $f_i(\hat{x})<0$ 
		for $i=1,...,n$ and $h_i(\hat{x})=0$ for $i=1,...,p$.
		\item There exists an optimal solution, denoted $x^*$.		
	\end{enumerate}
\end{assumption}
These assumptions are sufficient since by the finiteness of $f(x^*)$ and Slater's condition, strong 
duality holds and there exists at least one dual optimal 
solution $(\mu^*,\theta^*)$ \cite[Prop. 5.3.5]{bertsekas2009}.

Strong duality holds if and only if $(x^*,\mu^*,\theta^*)$ is a saddle point of $L(x,\mu,\theta)$  
\cite[Prop. 3.4.1]{bertsekas2009}, i.e. $\forall x\in\RR^d,\mu\geq \RR^n_+$, and $\theta\in\RR^p$,  
\begin{alignat}{6}\label{eq:2-7}
&L(x^*,\mu,\theta)\leq L(x^*,\mu^*,\theta^*) 
\leq L(x,\mu^*,\theta^*).
\end{alignat}
We will work with an unconstrained version of \eqref{eq:1}, written as 
\begin{alignat}{6}
&&\min\limits_{x\in\RR^d}\max_{\lambda\geq 0}\text{ }&F(x,\lambda):=f(x)+\lambda 
\overline{f}(x),\nonumber
\end{alignat}
where 
\begin{alignat}{6}
&&\overline{f}(x):=\max(f_1(x),f_2(x),...,f_n(x),|h_1(x)|,|h_2(x)|,...,|h_p(x)|).\nonumber
\end{alignat}
Let 
$\lambda^*=\sum_{i=1}^n\mu^*_i+\sum_{i=1}^p|\theta^*_i|$. By the fact that $\overline{f}(x^*)=0$ 
and complementary slackness, $\sum_{i=1}^n\mu^*_if_i(x^*)=0$, 
\begin{alignat}{6}
&&F(x^*,\lambda)=f(x^*)=L(x^*,\mu^*,\theta^*)\text{ }\forall \lambda,\nonumber
\end{alignat}
and
\begin{alignat}{6}
&&L(x,\mu^*,\theta^*)=&f(x)+\sum_{i=1}^n\mu^*_if_i(x)+\sum_{i=1}^p\theta^*_ih_i(x)\nonumber\\
&&\leq&f(x)+\sum_{i=1}^n\mu^*_if_i(x)+\sum_{i=1}^p|\theta^*_i||h_i(x)|\nonumber\\
&&\leq&f(x)+\lambda^*\overline{f}(x)\nonumber\\
&&=&F(x,\lambda^*),\nonumber
\end{alignat}
hence from \eqref{eq:2-7}, for all $(x,\lambda)\in\RR^{d+1}$,
\begin{alignat}{6}\label{eq:2-8}
&F(x^*,\lambda)\leq F(x,\lambda^*).
\end{alignat}
We will use the following notation for the subgradients needed of $F(x,\lambda)$,
\begin{alignat}{6}
&&g(x)\in&\partial f(x)\nonumber\\
&&g_i(x)\in&\partial f_i(x)\nonumber\\
&&\overline{g}(x)\in& \partial\overline{f}(x)=\conv(\{\partial f_i(x): 
f_i(x)=\overline{f}(x)\}\cup\{\partial|h_i(x)|:|h_i(x)|=\overline{f}(x)\})\nonumber\\
&&G_x(x,\lambda)\in&\partial_x F(x,\lambda)=\partial f(x)+\lambda\partial \overline{f}(x)\nonumber\\
&&G_{\lambda}(x,\lambda)=&\frac{\partial}{\partial \lambda}F(x,\lambda)=\overline{f}(x)\nonumber\\
&&G(x,\lambda)\in&\partial F(x,\lambda),\nonumber
\end{alignat}
where for subdifferential of $\partial \overline{f}(x)$, see for example \cite[Lemma 
3.1.10]{nesterovintro}. From a practical perspective, $\overline{g}(x)$ can be taken as an element 
$\overline{g}(x)\in \partial H(x)$, where 
$H(x)\in \{f_1(x),f_2(x),...,f_n(x),|h_1(x)|,|h_2(x)|,...,|h_p(x)|\}$ and $H(x)=\overline{f}(x)$.
Following the 
standard measure of 
convergence to a primal solution, given an optimal solution $x^*$, we 
define an algorithm's output $\bar{x}$ as an $(\epsilon_1,\epsilon_2)$-optimal solution if 
\begin{alignat}{6}
f(\bar{x})-f(x^*)\leq \epsilon_1\quad\text{and}\quad\overline{f}(\bar{x})\leq \epsilon_2.\nonumber
\end{alignat} 

\section{Weighted dual method}  

Convergence to an optimal solution is proven using the 
method of weighted dual averages of \cite{nesterov2009} presented as Algorithm \ref{alg:sgd}. When 
convenient we will use the column vector $w:=[x;\lambda]:=[x^T,\lambda^T]^T$, and the notation 
$\overline{G}_k:=\frac{[G_x(w_k);-G_{\lambda}(w_k)]}{\norm{G(w_k)}_2}$ (note that  
$\norm{\overline{G}_k}_2=1$).
\begin{algorithm}[H]
	\caption{Method of weighted dual averages}
	\label{alg:sgd}
	\begin{algorithmic}
		\STATE {\bfseries Input:} $w_0=[x_{0}\in \RR^d;\lambda_0\geq0]$; 
		$s_0=\hat{s}_0=\hat{x}_{0}=0$; $\beta_0=1$
		\FOR{$k=0,1,...,K-1$} 
		\STATE Compute  $G(w_k)\in \partial F(w_k)$ 
		\STATE $s_{k+1}=s_k+\frac{[G_x(w_k);-G_{\lambda}(w_k)]}{\norm{G(w_k)}_2}$
		\STATE $w_{k+1}=w_{0}-\frac{s_{k+1}}{\beta_{k}}$		
		\STATE $\beta_{k+1}=\beta_k+\frac{1}{\beta_k}$
		\STATE $\hat{s}_{k+1}=\hat{s}_k+\frac{1}{\norm{G(w_k)}_2}$
		\STATE $\hat{x}_{k+1}=\hat{x}_{k}+\frac{x_k}{\norm{G(w_k)}_2}$
		\ENDFOR
		\STATE Compute  $G(w_K)\in \partial F(w_K)$ 
		\STATE $\hat{s}_{K+1}=\hat{s}_K+\frac{1}{\norm{G(w_K)}_2}$
		\STATE $\hat{x}_{K+1}=\hat{x}_K+\frac{x_K}{\norm{G(w_K)}_2}$
		\RETURN $\overline{x}_{K+1}=\hat{s}_{K+1}^{-1}\hat{x}_{K+1}$
	\end{algorithmic}
\end{algorithm}
\noindent In each iteration $w_{k+1}=w_{0}-\frac{s_{k+1}}{\beta_{k}}$ is the maximizer of 
\begin{alignat}{6}\label{ufunc}
U^s_{\beta}(w):=-\langle s,w-w_0\rangle-\frac{\beta}{2}\norm{w-w_0}^2_2
\end{alignat}
for $s=s_{k+1}$ and $\beta=\beta_k$, with 
\begin{alignat}{6}\label{eq:optval}
U_{\beta_k}^{s_{k+1}}(w_{k+1})=\frac{\norm{s_{k+1}}^2_2}{2\beta_k}.
\end{alignat}
In addition, $U^s_{\beta}(w)$ is strongly concave in $w$ with parameter $\beta$,
\begin{alignat}{6}\label{strcave}
U_{\beta}^s(w)\leq U_{\beta}^s(w')+\langle \nabla U_{\beta}^s(w'),w-w'\rangle 
-\frac{\beta}{2}\norm{w-w'}^2_2.
\end{alignat}
Given that $G_{\lambda}(w_k)\geq 0$, it holds that $\lambda_{k+1}\geq \lambda_k$, with the $\lambda_k$ 
iterates always remaining feasible. The 
following property examines the case Algorithm \ref{alg:sgd} crashes due to $\norm{G(w_k)}_2=0$.
\begin{propertyy}
	\label{crash}
	If $\norm{G(w_k)}_2=0$, then $x_k$ is an optimal solution of \eqref{eq:1}.
\end{propertyy}

\begin{proof} 	
	
	If $\norm{G(w_k)}_2=0$, this implies that $G_{\lambda}(w_k)=\overline{f}(x_k)=0$ and hence $x_k$ 
	is a 
	feasible solution. From $\norm{G_{x}(w_k)}_2=0$, $0\in 
	\partial_x 
	F(x_k,\lambda_k)$, and as $F(x,\lambda_k)$ is convex in $x$, $x_k$ is a minimizer 
	of 
	$F(x,\lambda_k)$. It follows that for all $x\in\RR^d$ feasible in \eqref{eq:1},
	\begin{alignat}{6}
	f(x_k)=f(x_k)+\lambda_k\overline{f}(x_k)\leq f(x)+\lambda_k\overline{f}(x)=f(x).\nonumber
	\end{alignat}\qed
\end{proof} 
A key property of Algorithm \ref{alg:sgd} is that by redefining $G(w_k)$ appropriately, the iterates 
are bounded for quite general convex optimization problems. In particular, all that is required is 
that \eqref{conineq} in the proof below holds for the iterates to be bounded using 
\cite[Theorem 3]{nesterov2009}. For the sake of completeness we present the full proof for our 
application in Property \ref{bounded}. It is convenient to first prove a preliminary property which 
will be used in Property 
\ref{bounded} and Theorem~\ref{convergence}.

\begin{propertyy}
	\label{prelim}	
	For any iterate $\bar{k}\in\{1,2,...,K\}$ of Algorithm \ref{alg:sgd}, it holds that 
	\begin{alignat}{6}\label{constart}
\sum_{k=1}^{\bar{k}}\langle 
w_k-w_0,\overline{G}_k\rangle&\leq 
-U_{\beta_{\bar{k}}}^{s_{\bar{k}+1}}(w_{\bar{k}+1})+\frac{\beta_{\bar{k}}}{2}.
	\end{alignat}	
\end{propertyy}

\begin{proof} 	
	
From \eqref{eq:optval}, 
\begin{alignat}{6}
U_{\beta_k}^{s_{k+1}}(w_{k+1})&=\frac{\beta_{k-1}}{\beta_k}\frac{\norm{s_{k+1}}^2_2}{2\beta_{k-1}}\nonumber\\
&=\frac{\beta_{k-1}}{\beta_k}\frac{\norm{s_k+\overline{G}_k}^2_2}{2\beta_{k-1}}\nonumber\\
&=\frac{\beta_{k-1}}{\beta_k}\left(\frac{\norm{s_k}^2_2}{2\beta_{k-1}}+\frac{1}{\beta_{k-1}}\langle
s_k,\overline{G}_k\rangle+\frac{\norm{\overline{G}_k}^2_2}{2\beta_{k-1}}\right)\nonumber\\
&=\frac{\beta_{k-1}}{\beta_k}(U_{\beta_{k-1}}^{s_k}(w_k)+\frac{1}{\beta_{k-1}}\langle 
s_k,\overline{G}_k\rangle+\frac{1}{2\beta_{k-1}})\nonumber\\
&=\frac{\beta_{k-1}}{\beta_k}(U_{\beta_{k-1}}^{s_k}(w_k)+\langle 
w_0-w_k,\overline{G}_k\rangle+\frac{1}{2\beta_{k-1}}).\nonumber
\end{alignat}
Rearranging,
\begin{alignat}{6}
\langle 
w_k-w_0,\overline{G}_k\rangle 
&=U_{\beta_{k-1}}^{s_k}(w_k)-\frac{\beta_k}{\beta_{k-1}}U_{\beta_k}^{s_{k+1}}(w_{k+1})+\frac{1}{2\beta_{k-1}}\nonumber\\
&\leq U_{\beta_{k-1}}^{s_k}(w_k)-U_{\beta_k}^{s_{k+1}}(w_{k+1})+\frac{1}{2\beta_{k-1}},\nonumber
\end{alignat}	
since $\beta_k$ is increasing. Telescoping these inequalities for $k=1,..,\bar{k}$, 
\begin{alignat}{6}
\sum_{k=1}^{\bar{k}}\langle 
w_k-w_0,\overline{G}_k\rangle&\leq 
U_{\beta_{0}}^{s_1}(w_1)-U_{\beta_{\bar{k}}}^{s_{\bar{k}+1}}(w_{\bar{k}+1})+\sum_{k=1}^{\bar{k}}\frac{1}{2\beta_{k-1}}\label{kgo}\\
&=\frac{1}{2\beta_0}-U_{\beta_{\bar{k}}}^{s_{\bar{k}+1}}(w_{\bar{k}+1})+\sum_{k=0}^{\bar{k}-1}\frac{1}{2\beta_k}\nonumber\\
&=-U_{\beta_{\bar{k}}}^{s_{\bar{k}+1}}(w_{\bar{k}+1})+\frac{1}{2}(\sum_{k=0}^{\bar{k}-1}\frac{1}{\beta_k}+\beta_0),\nonumber
\end{alignat}	
where $\norm{s_1}_2=1$ was used in the first equality, and $\beta_0=1$ was used in 
the second equality. Expanding the recursion $\beta_k=\frac{1}{\beta_k-1}+\beta_{k-1}$,
\begin{alignat}{6}
\sum_{k=1}^{\bar{k}}\langle 
w_k-w_0,\overline{G}_k\rangle&\leq 
-U_{\beta_{\bar{k}}}^{s_{\bar{k}+1}}(w_{\bar{k}+1})+\frac{\beta_{\bar{k}}}{2}.\nonumber
\end{alignat}\qed	

\end{proof}

\begin{propertyy}
	\label{bounded}	
	For any iterate $\bar{k}\in\{1,2,...,K\}$ of Algorithm \ref{alg:sgd}, it holds that  	
	\begin{alignat}{6}
	\norm{w_{\bar{k}}-w^*}_2&\leq\norm{w_0-w^*}_2+1,\nonumber
	\end{alignat}
	with the inequality being strict when $w_0\neq w^*$.
\end{propertyy}

\begin{proof} 	
	
	Given the convexity of $F(x,\lambda)$ in $x$ and linearity in $\lambda$,
	\begin{alignat}{6}
	&F(x^*,\lambda_k)&\geq F(x_k,\lambda_k)+ \langle G_x(x_k,\lambda_k),x^*-x_k\rangle\label{4-1}
	\end{alignat} 	
	and
	\begin{alignat}{6}		
	&F(x_k,\lambda^*)&= F(x_k,\lambda_k)+ \langle 
	G_\lambda(x_k,\lambda_k),\lambda^*-\lambda_k\rangle.\label{4-2}
	\end{alignat}  	
	Subtracting \eqref{4-1} from \eqref{4-2} and using \eqref{eq:2-8},
	\begin{alignat}{6}\label{conineq}
	&0\leq \langle [G_x(w_k);-G_\lambda(w_k)],w_k-w^*\rangle.
	\end{alignat} 	
	
	For the case when $\bar{k}=1$,
	\begin{alignat}{6}
	\norm{w_1-w^*}^2_2=&\norm{w_0-w^*-\overline{G}_0}^2_2\nonumber\\
	=&\norm{w_0-w^*}^2_2-2\langle w_0-w^*, 
	\overline{G}_0\rangle+1\nonumber\\
	\leq&\norm{w_0-w^*}^2_2+1,\label{case1}
	\end{alignat}
	where the last line uses \eqref{conineq}. When $\bar{k}>1$ from \eqref{conineq},  
	\begin{alignat}{6}
	0\leq&\sum_{k=1}^{\bar{k}-1}\langle 
	w_k-w^*,\overline{G}_k\rangle\nonumber\\
	=&\sum_{k=1}^{\bar{k}-1}\langle 
	w_0-w^*,\overline{G}_k\rangle+\sum_{k=1}^{\bar{k}-1}\langle 
	w_k-w_0,\overline{G}_k\rangle\nonumber\\
	\leq&\sum_{k=1}^{\bar{k}-1}\langle 
	w_0-w^*,\overline{G}_k\rangle-U_{\beta_{\bar{k}-1}}^{s_{\bar{k}}}(w_{\bar{k}})+\frac{\beta_{\bar{k}-1}}{2}\nonumber\\
	=&\langle 
	w_0-w^*,s_{\bar{k}}\rangle-U_{\beta_{\bar{k}-1}}^{s_{\bar{k}}}(w_{\bar{k}})+\frac{\beta_{\bar{k}-1}}{2},\label{strcon}
	\end{alignat}	
	where the second inequality uses Property \ref{prelim}, and the second equality holds since 
	$s_{k+1}=s_k+\overline{G}_k$. Considering inequality \eqref{strcave} with $s=s_{\bar{k}}$, 
	$\beta=\beta_{\bar{k}-1}$, $w=w^*$, and $w'=w_{\bar{k}}$,
	\begin{alignat}{6}
	U_{\beta_{\bar{k}-1}}^{s_{\bar{k}}}(w^*)&\leq 
	U_{\beta_{\bar{k}-1}}^{s_{\bar{k}}}(w_{\bar{k}})+\langle \nabla 
	U_{\beta_{\bar{k}-1}}^{s_{\bar{k}}}(w_{\bar{k}}),w^*-w_{\bar{k}}\rangle 
	-\frac{\beta_{\bar{k}-1}}{2}\norm{w^*-w_{\bar{k}}}^2_2\nonumber\\
	&= 
	U_{\beta_{\bar{k}-1}}^{s_{\bar{k}}}(w_{\bar{k}})-\frac{\beta_{\bar{k}-1}}{2}\norm{w^*-w_{\bar{k}}}^2_2,\nonumber
	\end{alignat}	
	given that $w_{\bar{k}}$ is the maximum of $U_{\beta_{\bar{k}-1}}^{s_{\bar{k}}}(w)$.
	Applying this inequality in \eqref{strcon},
	\begin{alignat}{6}
	0\leq&\langle 
	w_0-w^*,s_{\bar{k}}\rangle-U_{\beta_{\bar{k}-1}}^{s_{\bar{k}}}(w^*)
	-\frac{\beta_{\bar{k}-1}}{2}\norm{w^*-w_{\bar{k}}}^2_2+\frac{\beta_{\bar{k}-1}}{2}\nonumber\\
	=&\langle 
	w_0-w^*,s_{\bar{k}}\rangle+\langle 
	s_{\bar{k}},w^*-w_0\rangle+\frac{\beta_{\bar{k}-1}}{2}\norm{w^*-w_0}^2_2
	-\frac{\beta_{\bar{k}-1}}{2}\norm{w^*-w_{\bar{k}}}^2_2+\frac{\beta_{\bar{k}-1}}{2}\nonumber\\
	=&\frac{\beta_{\bar{k}-1}}{2}\norm{w^*-w_0}^2_2
	-\frac{\beta_{\bar{k}-1}}{2}\norm{w^*-w_{\bar{k}}}^2_2+\frac{\beta_{\bar{k}-1}}{2},\nonumber
	\end{alignat}
	where the first equality uses the definition of $U_{\beta_{\bar{k}-1}}^{s_{\bar{k}}}(w^*)$ 
	\eqref{ufunc}. 
	Rearranging,
	\begin{alignat}{6}
	\norm{w^*-w_{\bar{k}}}^2_2\leq&\norm{w^*-w_0}^2_2+1.\label{normsq}
	\end{alignat}
	Now for all $\bar{k}$, from \eqref{case1} and \eqref{normsq},  
	\begin{alignat}{6}
	(\norm{w^*-w_0}_2+1)^2=&\norm{w^*-w_0}^2_2+2\norm{w^*-w_0}+1\nonumber\\
	\geq&\norm{w^*-w_{\bar{k}}}^2_2+2\norm{w^*-w_0},\nonumber
	\end{alignat}
	so that 
	\begin{alignat}{6}
	\label{bound2}
	\norm{w^*-w_0}_2+1\geq\norm{w^*-w_{\bar{k}}}_2,
	\end{alignat}
	with \eqref{bound2} being strict when $w^*\neq w_0$.\qed
\end{proof}
In order to prove the convergence result of Algorithm \ref{alg:sgd}, we require bounding 
the norm of the subgradients $G(w_k)$. 

\begin{propertyy}
	\label{gradbound}
	There exists a constant $L$ such that $\norm{g(x_k)}_2\leq L$, 
	$\norm{\overline{g}(x_k)}_2\leq L$, $\overline{f}(x_k)\leq L\norm{x_k-x^*}_2$, and 
	\begin{alignat}{6}
	\norm{G(w_k)}_2\leq&L(2\norm{w_0-w^*}_2+\lambda^*+3)\nonumber
	\end{alignat}
	for all $k$. 
\end{propertyy}

\begin{proof} 	
	Recall that $g(x)\in \partial f(x)$ and $\overline{g}(x)\in \partial \overline{f}(x)$, 
	\begin{alignat}{6}
	\norm{G(w_k)}_2=&\norm{[G_x(w_k);G_{\lambda}(w_k)]}_2\nonumber\\
	=&\norm{[g(x_k)+\lambda_k\overline{g}(x_k);\overline{f}(x_k)]}_2\nonumber\\
	\leq&\norm{g(x_k)}_2+\lambda_k\norm{\overline{g}(x_k)}_2+\overline{f}(x_k).\label{13}
	\end{alignat}	
	Property 3 ensures that the iterates of Algorithm \ref{alg:sgd} are bounded in a convex compact 
	region, 
	$w_k\in 
	D:=\{w : \norm{w-w^*}_2\leq \norm{w_0-w^*}_2+1\}$. This implies that $x_k\in 
	D_x:=\{x : \norm{x-x^*}_2\leq \norm{w_0-w^*}_2+1\}$ and $\lambda_k\in 
	D_{\lambda}:=\{\lambda : |\lambda-\lambda^*|\leq \norm{w_0-w^*}_2+1\}$. 
	It follows that there exists an $L_1\geq 0$ such that $f(x)$ is $L_1$-Lipschitz continuous 
	on $D_x$ 
	\cite[Theorem IV.3.1.2]{hiriart1996},
	\begin{alignat}{6}\label{Lip}
	|f(x)-f(x')|\leq L_1\norm{x-x'}_2,
	\end{alignat}	
	for all $x,x'\in D_x$. Assuming that $w_0\neq w^*$, $x_k\in \Int D_x$. For 
	any $x\in \Int D_x$, taking $\theta>0$ small enough such 
	that $x'=x+\theta\frac{g(x)}{\norm{g(x)}_2}\in D_x$,
	\begin{alignat}{6}
	&\quad&\langle g(x), x'-x\rangle&\leq f(x')-f(x)\nonumber\\
	\Longrightarrow&\quad&\langle g(x), x'-x\rangle&\leq L_1\norm{x'-x}_2\nonumber\\
	\Longrightarrow&\quad&\langle g(x), \theta\frac{g(x)}{\norm{g(x)}_2}\rangle&\leq 
	L_1\theta\nonumber\\
	\Longrightarrow&\quad&\norm{g(x)}_2&\leq L_1.\label{gradbound2}
	\end{alignat}    
	If $w_0=w^*$, $x_k\in\Int D_x^\delta:=\{x : \norm{x-x^*}_2\leq \delta+1\}$ for any
	$\delta>0$, and $L_1$ can be increased such that \eqref{Lip} holds over $D^{\delta}_x$ so that 
	\eqref{gradbound2} holds for all $x\in D_x$. Similarly, there exists an 
	$L_2\geq0$ such that $|\overline{f}(x)-\overline{f}(x')|\leq L_2\norm{x-x'}_2$
	and $\norm{\overline{g}(x)}_2\leq L_2$ for all 
	$x,x'\in D_x$. In addition,
	\begin{alignat}{6}
	\overline{f}(x_k)&=|\overline{f}(x_k)-\overline{f}(x^*)|\nonumber\\
	&\leq L_2\norm{x_k-x^*}_2\nonumber\\
	&\leq L_2(\norm{w_0-w^*}_2+1),\nonumber
	\end{alignat}
	and $\lambda_k\leq \norm{w_0-w^*}_2+1+\lambda^*$ from the definition of $D_{\lambda}$. Combining 
	these bounds in \eqref{13} and 
	taking 
	$L=\max(L_1,L_2)$, 
	\begin{alignat}{6}
	\norm{G(w_k)}_2\leq&\norm{g(x_k)}_2+\lambda_k\norm{\overline{g}(x_k)}_2+\overline{f}(x_k)\nonumber\\
	\leq&L_1+(\norm{w_0-w^*}_2+1+\lambda^*)L_2+L_2(\norm{w_0-w^*}_2+1)\nonumber\\
	\leq&L(2\norm{w_0-w^*}_2+\lambda^*+3).\nonumber
	\end{alignat}\qed	
\end{proof}
For all $k$ the value of $\beta_k$ can be bounded as follows using induction.
\begin{propertyy}{\cite[Lemma 3]{nesterov2009}}
	\label{betabound}
	\begin{alignat}{6}
	\beta_{k}\leq \frac{1}{1+\sqrt{3}}+\sqrt{2k+1}\nonumber
	\end{alignat}
\end{propertyy}
We can now prove a convergence rate of $O(\frac{1}{\sqrt{K}})$ to an optimal solution of problem 
\eqref{eq:1}. We will define the bound on 
$\norm{G(w_k)}_2$ from Property \eqref{gradbound} as $C:=L(2\norm{w_0-w^*}_2+\lambda^*+3)$.
\begin{theorem}
	\label{convergence}
	Running Algorithm \ref{alg:sgd} for $K$ iterations, 
	\begin{alignat}{6}
	f(\bar{x}_{K+1})-f(x^*)\leq 
	\frac{C(\norm{w_0-w^*}^2_2+1)}{2(K+1)}\left(\frac{1}{1+\sqrt{3}}+\sqrt{2K+1}\right)\nonumber
	\end{alignat}
	and
	\begin{alignat}{6}
	\overline{f}(\bar{x}_{K+1})\leq 
	\frac{C(4(\norm{w_0-w^*}_2+1)^2+1)}{2(K+1)}\left(\frac{1}{1+\sqrt{3}}+\sqrt{2K+1}\right),\nonumber
	\end{alignat}	
	where $C:=L(2\norm{w_0-w^*}_2+\lambda^*+3)$.
\end{theorem}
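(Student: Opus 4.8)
Assume $\norm{G(w_k)}_2>0$ for every $k$, since otherwise Property~\ref{crash} shows the corresponding $x_k$ is already an optimal solution of \eqref{eq:1}. The plan is a weighted dual-averaging (online-to-batch) estimate for the saddle function $F$, combined with Properties~\ref{prelim}, \ref{gradbound}, and \ref{betabound} and the saddle-point inequality \eqref{eq:2-8}. The first step would be a per-iteration inequality against an arbitrary comparator $w=[x;\lambda]$ with $\lambda\geq 0$: since $G_x(w_k)\in\partial_x F(x_k,\lambda_k)$ (convexity of $F(\cdot,\lambda_k)$) and $F(x_k,\cdot)$ is affine with slope $G_\lambda(w_k)=\overline{f}(x_k)$,
\begin{alignat}{6}
\frac{F(x_k,\lambda)-F(x,\lambda_k)}{\norm{G(w_k)}_2}\leq\langle\overline{G}_k,w_k-w\rangle .\nonumber
\end{alignat}

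Next I would sum over $k=0,\dots,K$ and split $w_k-w=(w_k-w_0)+(w_0-w)$. The first part is controlled by Property~\ref{prelim} with $\bar k=K$ (the $k=0$ term vanishes and $\sum_{k=0}^K\overline{G}_k=s_{K+1}$), giving $\sum_{k=0}^K\langle\overline{G}_k,w_k-w_0\rangle\leq -U_{\beta_K}^{s_{K+1}}(w_{K+1})+\tfrac{\beta_K}{2}$; for the second part, the definition \eqref{ufunc} of $U$ gives $\langle s_{K+1},w_0-w\rangle=U_{\beta_K}^{s_{K+1}}(w)+\tfrac{\beta_K}{2}\norm{w-w_0}^2_2\leq U_{\beta_K}^{s_{K+1}}(w_{K+1})+\tfrac{\beta_K}{2}\norm{w-w_0}^2_2$, since $w_{K+1}$ maximizes $U_{\beta_K}^{s_{K+1}}$. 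The two $U$-terms cancel, so
\begin{alignat}{6}
\sum_{k=0}^K\langle\overline{G}_k,w_k-w\rangle\leq\frac{\beta_K}{2}\bigl(1+\norm{w-w_0}^2_2\bigr).\nonumber
\end{alignat}
Combining with the per-iteration inequality and dividing by $\hat s_{K+1}=\sum_{k=0}^K\norm{G(w_k)}_2^{-1}$, the left-hand side is a convex combination of $F(x_k,\lambda)-F(x,\lambda_k)$ whose $x$-weights are exactly those defining $\bar x_{K+1}$; Jensen's inequality for the convex functions $f$ and $\overline{f}$ (using $\lambda\geq 0$), together with the linearity of $F$ in $\lambda$, then yields, for all $x$ and all $\lambda\geq 0$,
\begin{alignat}{6}
F(\bar x_{K+1},\lambda)-F(x,\bar\lambda)\leq\frac{\beta_K}{2\hat s_{K+1}}\bigl(1+\norm{[x;\lambda]-w_0}^2_2\bigr),\nonumber
\end{alignat}
with $\bar\lambda$ the corresponding convex combination of the $\lambda_k$.

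Finally I would specialize $x=x^*$, so $F(x^*,\bar\lambda)=f(x^*)$ because $\overline{f}(x^*)=0$. Taking $\lambda=\lambda^*$ and using $F(\bar x_{K+1},\lambda^*)\geq f(\bar x_{K+1})$ (as $\lambda^*\overline{f}(\bar x_{K+1})\geq 0$) and $\norm{[x^*;\lambda^*]-w_0}_2=\norm{w_0-w^*}_2$ gives the objective bound; taking $\lambda=\lambda^*+\tfrac12$, using \eqref{eq:2-8} in the form $f(\bar x_{K+1})\geq f(x^*)-\lambda^*\overline{f}(\bar x_{K+1})$ to cancel the objective term (leaving $\tfrac12\overline{f}(\bar x_{K+1})$ on the left) and bounding $\norm{[x^*;\lambda^*+\tfrac12]-w_0}_2\leq\norm{w_0-w^*}_2+1$ by the triangle inequality gives the feasibility bound. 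In both cases one substitutes $\hat s_{K+1}\geq (K+1)/C$ (from $\norm{G(w_k)}_2\leq C$, Property~\ref{gradbound}) and $\beta_K\leq\frac{1}{1+\sqrt3}+\sqrt{2K+1}$ (Property~\ref{betabound}); for the feasibility bound one also uses $2+2t^2\leq 1+4t^2$ with $t=\norm{w_0-w^*}_2+1\geq 1$ to reach the stated form.

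The step I expect to be the main obstacle is the regret-type estimate $\sum_{k=0}^K\langle\overline{G}_k,w_k-w\rangle\leq\frac{\beta_K}{2}(1+\norm{w-w_0}^2_2)$ \emph{uniformly in the comparator} $w$: this requires pairing the telescoping bound of Property~\ref{prelim}, which only controls the $w_k-w_0$ contribution, with the quadratic structure of $U^s_\beta$ for the $w_0-w$ contribution, while correctly tracking the index range ($K+1$ summands even though the main loop of Algorithm~\ref{alg:sgd} runs only to $K-1$, the term $G(w_K)$ being supplied by the final lines of the algorithm). Once this is in place, the two choices $\lambda=\lambda^*$ and $\lambda=\lambda^*+\tfrac12$ and the remaining constant manipulations are routine.\qed
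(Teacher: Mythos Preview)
Your proposal is correct and follows the same weighted-dual-averaging argument as the paper: your regret estimate $\sum_{k=0}^K\langle\overline{G}_k,w_k-w\rangle\leq\tfrac{\beta_K}{2}(1+\norm{w-w_0}^2_2)$ is exactly \eqref{prfin} rewritten, and the Jensen/linearity passage to $(\bar x_{K+1},\bar\lambda)$ is identical. The one genuine difference is the comparator you pick for the feasibility bound. The paper takes $[x;\lambda]=[\bar x_{K+1};\bar\lambda_{K+1}+1]$, which cancels the $f$-terms directly but then must bound $\norm{\bar w_{K+1}-w_0}_2$ via Property~\ref{bounded} and a further Jensen step (the chain \eqref{c12}). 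You instead take $[x;\lambda]=[x^*;\lambda^*+\tfrac12]$ and use the saddle-point inequality \eqref{eq:2-8} to absorb the objective term, leaving $\tfrac12\overline{f}(\bar x_{K+1})$; the distance $\norm{[x^*;\lambda^*+\tfrac12]-w_0}_2\leq\norm{w_0-w^*}_2+1$ is then immediate, and the elementary bound $2(1+t^2)\leq 1+4t^2$ for $t\geq1$ recovers the stated constant. Your variant is a bit shorter and avoids the explicit appeal to Property~\ref{bounded} at that step (it is still used implicitly through Property~\ref{gradbound}); the paper's choice has the minor advantage that it yields $\overline{f}(\bar x_{K+1})$ without relying on \eqref{eq:2-8}. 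Both routes are valid and lead to the same bound.
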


\begin{proof} 	
	Applying Property \ref{prelim} with $\bar{k}=K$, and recalling that 
	$w_{K+1}$ maximizes 
	$U_{\beta_K}^{s_{K+1}}(w)$ 
	\eqref{ufunc}, 
	\begin{alignat}{6}
	\frac{\beta_K}{2}\geq&\sum_{k=1}^K\langle 
	w_k-w_0,\overline{G}_k\rangle+U_{\beta_K}^{s_{K+1}}(w_{K+1})\nonumber\\
	=&\sum_{k=1}^K\langle 
	w_k-w_0,\overline{G}_k\rangle+\max\limits_{w\in\RR^{d+1}}\lbrace-\langle 
	\sum_{k=0}^K\overline{G}_k,w-w_0\rangle-\frac{\beta_K}{2}\norm{w-w_0}^2_2\rbrace\nonumber\\
	=&\max\limits_{w\in\RR^{d+1}}\lbrace 
	\sum_{k=0}^K-\langle 
	\overline{G}_k,w-w_k\rangle-\frac{\beta_K}{2}\norm{w-w_0}^2_2\rbrace.\label{prfin}
	\end{alignat}	
	Like $\overline{x}_{K+1}$, let 
	$\overline{w}_{K+1}:=\hat{s}_{K+1}^{-1}\sum_{k=0}^K\frac{w_k}{\norm{G(w_k)}_2}$
	and $\overline{\lambda}_{K+1}:=\hat{s}_{K+1}^{-1}\sum_{k=0}^K\frac{\lambda_k}{\norm{G(w_k)}_2}$.
	Multiplying both sides of \eqref{prfin} by $\hat{s}_{K+1}^{-1}$,
	\begin{alignat}{6}
	\hat{s}_{K+1}^{-1}\frac{\beta_K}{2}\geq&\hat{s}_{K+1}^{-1}\max\limits_{w\in\RR^{d+1}}\lbrace 
	\sum_{k=0}^K-\langle 
	\overline{G}_k,w-w_k\rangle-\frac{\beta_K}{2}\norm{w-w_0}^2_2\rbrace\nonumber\\
	=&\hat{s}_{K+1}^{-1}\max\limits_{w\in\RR^{d+1}}\lbrace 
	\sum_{k=0}^K-\langle 
	\frac{G_x(w_k)}{\norm{G(w_k)}_2},x-x_k\rangle+\langle 
	\frac{G_{\lambda}(w_k)}{\norm{G(w_k)}_2},\lambda-\lambda_k\rangle-\frac{\beta_K}{2}\norm{w-w_0}^2_2\rbrace\nonumber\\
	\geq&\hat{s}_{K+1}^{-1}\max\limits_{w\in\RR^{d+1}}\lbrace 
	\sum_{k=0}^K 
	\frac{F(x_k,\lambda_k)-F(x,\lambda_k)}{\norm{G(w_k)}_2}+\frac{F(x_k,\lambda)-F(x_k,\lambda_k)}{\norm{G(w_k)}_2}-\frac{\beta_K}{2}\norm{w-w_0}^2_2\rbrace\nonumber\\
	=&\hat{s}_{K+1}^{-1}\max\limits_{w\in\RR^{d+1}}\lbrace 
	\sum_{k=0}^K 
	\frac{F(x_k,\lambda)-F(x,\lambda_k)}{\norm{G(w_k)}_2}-\frac{\beta_K}{2}\norm{w-w_0}^2_2\rbrace\nonumber\\
	=&\max\limits_{w\in\RR^{d+1}}\lbrace 
	\hat{s}_{K+1}^{-1}\sum_{k=0}^K 
	\frac{F(x_k,\lambda)-F(x,\lambda_k)}{\norm{G(w_k)}_2}-\hat{s}_{K+1}^{-1}\frac{\beta_K}{2}\norm{w-w_0}^2_2\rbrace\nonumber\\
	\geq&\max\limits_{w\in\RR^{d+1}}\lbrace 
	F(\overline{x}_{K+1},\lambda)-F(x,\overline{\lambda}_{K+1})-\hat{s}_{K+1}^{-1}\frac{\beta_K}{2}\norm{w-w_0}^2_2\rbrace\nonumber\\
	=&\max\limits_{w\in\RR^{d+1}}\lbrace f(\overline{x}_{K+1})+\lambda\overline{f}(\overline{x}_{K+1})
	-f(x)-\overline{\lambda}_{K+1}\overline{f}(x)-\hat{s}_{K+1}^{-1}\frac{\beta_K}{2}\norm{w-w_0}^2_2\rbrace,\label{maxx}
	\end{alignat} 
	where the second inequality follows from the convexity and linearity of $F(x,\lambda)$ in $x$ and  
	$\lambda$, respectively. The third inequality uses Jensen's inequality and linearity: 
	$\overline{x}_{K+1}=\hat{s}_{K+1}^{-1}\sum_{k=0}^K\frac{x_K}{\norm{G(w_k)}_2}$ is a convex 
	combination of $\{x_k\}$, so by Jensen's inequality, $$F(\overline{x}_{K+1},\lambda)\leq 
	\hat{s}_{K+1}^{-1}\sum_{k=0}^K \frac{F(x_k,\lambda)}{\norm{G(w_k)}_2},$$ 
	and $\overline{\lambda}_{K+1}=\hat{s}_{K+1}^{-1}\sum_{k=0}^K\frac{\lambda_k}{\norm{G(w_k)}_2}$,
	so by the linearity of of $F(x,\lambda)$ in $\lambda$, 
	$$\hat{s}_{K+1}^{-1}\sum_{k=0}^K 
	\frac{-F(x,\lambda_k)}{\norm{G(w_k)}_2}=-F(x,\overline{\lambda}_{K+1}).$$ 
	Given the maximum function, the inequality \eqref{maxx} holds for any choice of $w$. We consider 
	two cases, the first being $x=\overline{x}_{K+1}$ and 
	$\lambda=\overline{\lambda}_{K+1}+1$. From \eqref{maxx},
	\begin{alignat}{6}
	\hat{s}_{K+1}^{-1}\frac{\beta_K}{2}\geq&f(\overline{x}_{K+1})+(\overline{\lambda}_{K+1}+1)\overline{f}(\overline{x}_{K+1})
	-f(\overline{x}_{K+1})-\overline{\lambda}_{K+1}\overline{f}(\overline{x}_{K+1})\nonumber\\
	&-\hat{s}_{K+1}^{-1}\frac{\beta_K}{2}\norm{[\overline{x}_{K+1};\overline{\lambda}_{K+1}+1]-w_0}^2_2\nonumber\\
	=&\overline{f}(\overline{x}_{K+1})-\hat{s}_{K+1}^{-1}\frac{\beta_K}{2}\norm{[\overline{x}_{K+1};\overline{\lambda}_{K+1}+1]-w_0}^2_2.\label{c11}
	\end{alignat}	
	Further,
	\begin{alignat}{6}
	\norm{[\overline{x}_{K+1};\overline{\lambda}_{K+1}+1]-w_0}_2\leq&\norm{\overline{w}_{K+1}-w_0}_2+1\nonumber\\
	=&\norm{\overline{w}_{K+1}-w^*+w^*-w_0}_2+1\nonumber\\
	\leq&\norm{\overline{w}_{K+1}-w^*}_2+\norm{w^*-w_0}_2+1\nonumber\\
	\leq&\hat{s}_{K+1}^{-1}\sum_{k=0}^K\frac{\norm{w_k-w^*}_2}{\norm{G(w_k)}_2}+\norm{w^*-w_0}_2+1\nonumber\\
	\leq&\hat{s}_{K+1}^{-1}\sum_{k=0}^K\frac{\norm{w_0-w^*}_2+1}{\norm{G(w_k)}_2}+\norm{w^*-w_0}_2+1\nonumber\\
	=&2(\norm{w_0-w^*}_2+1),\label{c12}
	\end{alignat} 
	where the third inequality uses Jensen's inequality: 
	$\overline{w}_{K+1}=\hat{s}_{K+1}^{-1}\sum_{k=0}^K\frac{w_k}{\norm{G(w_k)}_2}$ is a convex 
	combination of $\{w_k\}$. The fourth inequality uses Property 
	\ref{bounded}. Combining \eqref{c11} 
	and \eqref{c12}, 
	\begin{alignat}{6}
	\overline{f}(\overline{x}_{K+1})\leq 
	\hat{s}_{K+1}^{-1}\frac{\beta_K}{2}(4(\norm{w_0-w^*}_2+1)^2+1).\nonumber
	\end{alignat}	
	The second case will use $w=w^*$. Starting from \eqref{maxx},
	\begin{alignat}{6}
	\hat{s}_{K+1}^{-1}\frac{\beta_K}{2}\geq&f(\overline{x}_{K+1})+\lambda^*\overline{f}(\overline{x}_{K+1})
	-f(x^*)-\overline{\lambda}_{K+1}\overline{f}(x^*)-\hat{s}_{K+1}^{-1}\frac{\beta_K}{2}\norm{w^*-w_0}^2_2\nonumber\\
	\geq&f(\overline{x}_{K+1})-f(x^*)-\hat{s}_{K+1}^{-1}\frac{\beta_K}{2}\norm{w^*-w_0}^2_2,\nonumber
	\end{alignat} 	
	since $\overline{f}(x^*)=0$ and $\lambda^*\overline{f}(\overline{x}_{K+1})\geq 0$. Rearranging, 
	\begin{alignat}{6}
	f(\overline{x}_{K+1})-f(x^*)\leq 
	\hat{s}_{K+1}^{-1}\frac{\beta_K}{2}(\norm{w_0-w^*}^2_2+1).\nonumber
	\end{alignat}	
	Using Properties \ref{gradbound} and \ref{betabound}, $\hat{s}_{K+1}^{-1}\frac{\beta_K}{2}$ can be 
	bounded as 
	follows. 
	\begin{alignat}{6}
	\hat{s}_{K+1}^{-1}\frac{\beta_K}{2}\leq&\frac{1}{2}(\sum_{k=0}^K\frac{1}{\norm{G(w_k)}_2})^{-1}(\frac{1}{1+\sqrt{3}}+\sqrt{2k+1})\nonumber\\
	\leq&\frac{1}{2}(\sum_{k=0}^K\frac{1}{C})^{-1}(\frac{1}{1+\sqrt{3}}+\sqrt{2k+1})\nonumber\\
	=&\frac{C}{2(K+1)}(\frac{1}{1+\sqrt{3}}+\sqrt{2k+1}).\nonumber
	\end{alignat}\qed	
\end{proof}
Algorithm \ref{alg:sgd} is an optimal method as its convergence rate of $O(\frac{1}{\sqrt{K}})$ from 
Theorem \ref{convergence} matches the lower complexity bound for minimizing the unconstrained version 
of \eqref{eq:1} as discussed in the introduction. The following corollary establishes the  
$O(\min(\epsilon_1,\epsilon_2)^{-2})$ iteration complexity required to achieve an 
$(\epsilon_1,\epsilon_2)$-optimal solution.
\begin{corollary}
	\label{corol}
	An $(\epsilon_1,\epsilon_2)$ optimal solution is obtained after running Algorithm \ref{alg:sgd} 
	for  
	\begin{alignat}{6}
	K\geq\alpha\max\left(\frac{C_1}{\epsilon_1},\frac{C_2}{\epsilon_2}\right)^2\nonumber
	\end{alignat}
	iterations, where $C_1=C(\norm{w_0-w^*}^2_2+1)$, $C_2=C(4(\norm{w_0-w^*}_2+1)^2+1)$, and 
	$\alpha=\frac{1}{2}(\frac{1}{\sqrt{8}(1+\sqrt{3})}+1)^2$. 
\end{corollary}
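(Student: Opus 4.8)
The plan is to reduce both error bounds of Theorem~\ref{convergence} to a single scalar inequality in $K$ and then invert it. Set $\phi(K):=\frac{1}{2(K+1)}\left(\frac{1}{1+\sqrt{3}}+\sqrt{2K+1}\right)$, so that Theorem~\ref{convergence} reads $f(\bar{x}_{K+1})-f(x^*)\le C_1\,\phi(K)$ and $\overline{f}(\bar{x}_{K+1})\le C_2\,\phi(K)$ with $C_1,C_2$ as in the statement. Hence an $(\epsilon_1,\epsilon_2)$-optimal solution is guaranteed as soon as $\phi(K)\le\min(\epsilon_1/C_1,\epsilon_2/C_2)=1/\gamma$, where $\gamma:=\max(C_1/\epsilon_1,C_2/\epsilon_2)$. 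So it suffices to (i) establish a bound of the form $\phi(K)\le\sqrt{\alpha}/\sqrt{K+1}$ for all $K\ge0$, with $\sqrt{\alpha}=\frac{1}{\sqrt{2}}\bigl(\frac{1}{\sqrt{8}(1+\sqrt{3})}+1\bigr)$, and then (ii) observe that $K\ge\alpha\gamma^2$ forces $\sqrt{K+1}>\sqrt{\alpha}\,\gamma$, whence $\phi(K)<1/\gamma$, so both target accuracies follow by multiplying through by $C_1$ and $C_2$.

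The substance is step (i), which is just an elementary inequality. Multiplying by $2(K+1)$ and using $\sqrt{2}\sqrt{2K+2}=2\sqrt{K+1}$, $\sqrt{2}\sqrt{8}=4$, and $\sqrt{2K+2}/\sqrt{8}=\sqrt{K+1}/2$, the claim $\phi(K)\le\sqrt{\alpha}/\sqrt{K+1}$ is equivalent to
\begin{alignat}{6}
\frac{1}{1+\sqrt{3}}+\sqrt{2K+1}\le\frac{\sqrt{2K+2}}{\sqrt{8}\,(1+\sqrt{3})}+\sqrt{2K+2}.\nonumber
\end{alignat}
Since $\sqrt{2K+1}\le\sqrt{2K+2}$, it suffices to verify $\frac{1}{1+\sqrt{3}}\bigl(1-\tfrac{\sqrt{K+1}}{2}\bigr)\le\sqrt{2K+2}-\sqrt{2K+1}=\frac{1}{\sqrt{2K+2}+\sqrt{2K+1}}$. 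For $K\ge3$ the left side is nonpositive and there is nothing to prove; for $K\in\{0,1,2\}$ the left side is decreasing in $K$, hence at most its value $\tfrac{\sqrt{3}-1}{4}$ at $K=0$, while the right side is also decreasing in $K$, hence at least its value $\sqrt{6}-\sqrt{5}$ at $K=2$, and $\tfrac{\sqrt{3}-1}{4}\le\sqrt{6}-\sqrt{5}$.

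To finish I would assemble the constants: substitute $C_1=C(\norm{w_0-w^*}^2_2+1)$ and $C_2=C(4(\norm{w_0-w^*}_2+1)^2+1)$, recognise $K\ge\alpha\max(C_1/\epsilon_1,C_2/\epsilon_2)^2$ as precisely $K\ge\alpha\gamma^2$, and conclude via step (ii). The only place requiring care is the case split in step (i): because $\sqrt{2K+2}-\sqrt{2K+1}\to0$, the right side admits no positive lower bound over all $K$, so one genuinely needs the ``trivial for $K\ge3$, checked by hand for $K\le2$'' structure rather than a single uniform estimate. Everything else is routine algebra.
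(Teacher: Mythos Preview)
Your argument is correct and follows the same skeleton as the paper: reduce both bounds from Theorem~\ref{convergence} to $\phi(K)\le 1/\gamma$, establish $\phi(K)\lesssim\sqrt{\alpha}/\sqrt{K}$, and invert. The only difference is in how the elementary inequality is verified: you prove the slightly sharper $\phi(K)\le\sqrt{\alpha}/\sqrt{K+1}$ via a case split and a numerical check at $K\in\{0,1,2\}$, whereas the paper simply multiplies through by $\sqrt{K}$ and bounds the two resulting terms separately,
\[
\frac{\sqrt{K}}{2(K+1)(1+\sqrt{3})}\le\frac{1}{4(1+\sqrt{3})},\qquad \frac{\sqrt{2K^2+K}}{2(K+1)}<\frac{1}{\sqrt{2}},
\]
which avoids any case analysis. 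Your route buys coverage of $K=0$ and a marginally tighter constant; the paper's buys a one-line verification.
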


\begin{proof} 	
	From Theorem \ref{convergence} for $i=1,2$, we need to compute a lower bound on $K$ which 
	ensures that 
	\begin{alignat}{6}
	\frac{C_i}{2(K+1)}(\frac{1}{1+\sqrt{3}}+\sqrt{2K+1})&\leq \epsilon_i.\nonumber
	\end{alignat}		
	Since for $K\geq 1$,
	\begin{alignat}{6}
	&\frac{\sqrt{K}}{2(K+1)}(\frac{1}{1+\sqrt{3}}+\sqrt{2K+1})\nonumber\\
	=&\frac{\sqrt{K}}{2(1+\sqrt{3})(K+1)}+\frac{\sqrt{2K^2+K}}{2(K+1)}\nonumber\\
	<&\frac{1}{4(1+\sqrt{3})}+\frac{1}{\sqrt{2}}\nonumber\\
	=&\frac{1}{\sqrt{2}}(\frac{1}{\sqrt{8}(1+\sqrt{3})}+1)\nonumber\\
	=&\sqrt{\alpha},\nonumber
	\end{alignat}
	it holds that
	\begin{alignat}{6}  	
	\frac{C_i}{2(K+1)}(\frac{1}{1+\sqrt{3}}+\sqrt{2K+1}) \leq 
	\frac{\sqrt{\alpha}C_i}{\sqrt{K}}.\nonumber
	\end{alignat}	
	To ensure convergence within $\epsilon_i$, it is sufficient for 	
	$\epsilon_i\geq\frac{\sqrt{\alpha} C_i}{\sqrt{K}}$, or that
	$K\geq\alpha(\frac{C_i}{\epsilon_i})^2$. Taking the maximum over $i$ gives the result.\qed	
\end{proof}

\section{Conclusion}
In this paper we have established the existence of a simple first-order method for the general convex 
constrained optimization problem \eqref{eq:1} without the need for differentiability nor Lipschitz 
continuity. We see this as a general use algorithm for practitioners since it requires minimal 
knowledge of the problem, with no parameter tuning for its implementation, while still achieving the 
optimal convergence rate for first-order methods.


%
%

\bibliographystyle{spmpsci}      
\bibliography{dualsubgrad}   


\end{document}